\author{Vassily Olegovich Manturov\footnote{Bauman Moscow State Technical Unversity 2nd Baumanskaya St.5/1, Moscow, 105005, Russia, and Laboratory of Quantum Topology, Chelyabinsk State University, Brat'ev Kashirinykh street 129, Chelyabinsk 454001, Russia.} \footnote{\tt {vomanturov@yandex.ru}}
\footnote{Partially supported by Laboratory of Quantum Topology of
Chelyabinsk State University (Russian Federation government grant
14.Z50.31.0020) and by grants of the Russian foundation for Basic
Resarch, 13-01-00830,14-01-91161, 14-01-31288}}
\date{}
\theoremstyle{plain}
\newtheorem{thm}{Theorem}
\theoremstyle{dfn}
\newtheorem{dfn}{Definition}
\newtheorem{rk}{Remark}
\def\R{{\mathbb R}}
 \def\Z{{\mathbb Z}}
 \def\0{{\mathbbf 0}}
 \def\1{{\mathbbf 1}}
 \newcommand{\skcr}{\raisebox{-0.25\height}{\includegraphics[width=0.5cm]{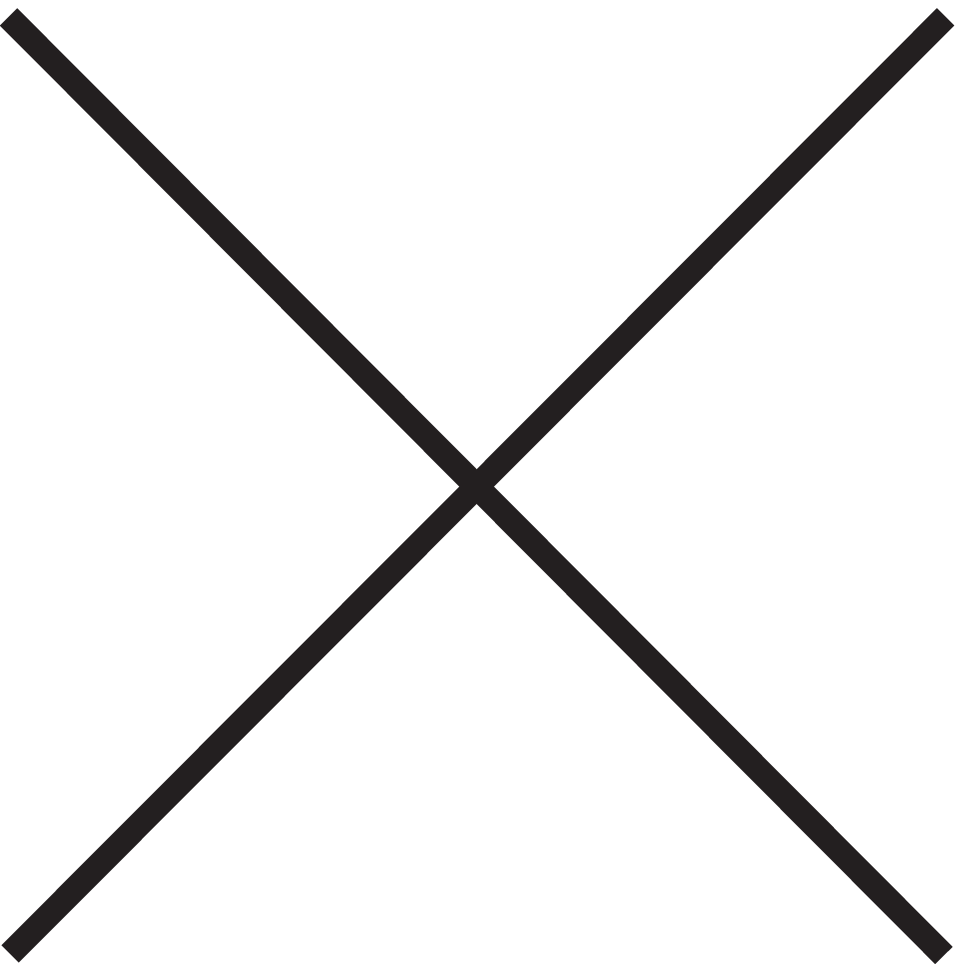}}}
 \newcommand{\skcrh}{\raisebox{-0.25\height}{\includegraphics[width=0.5cm]{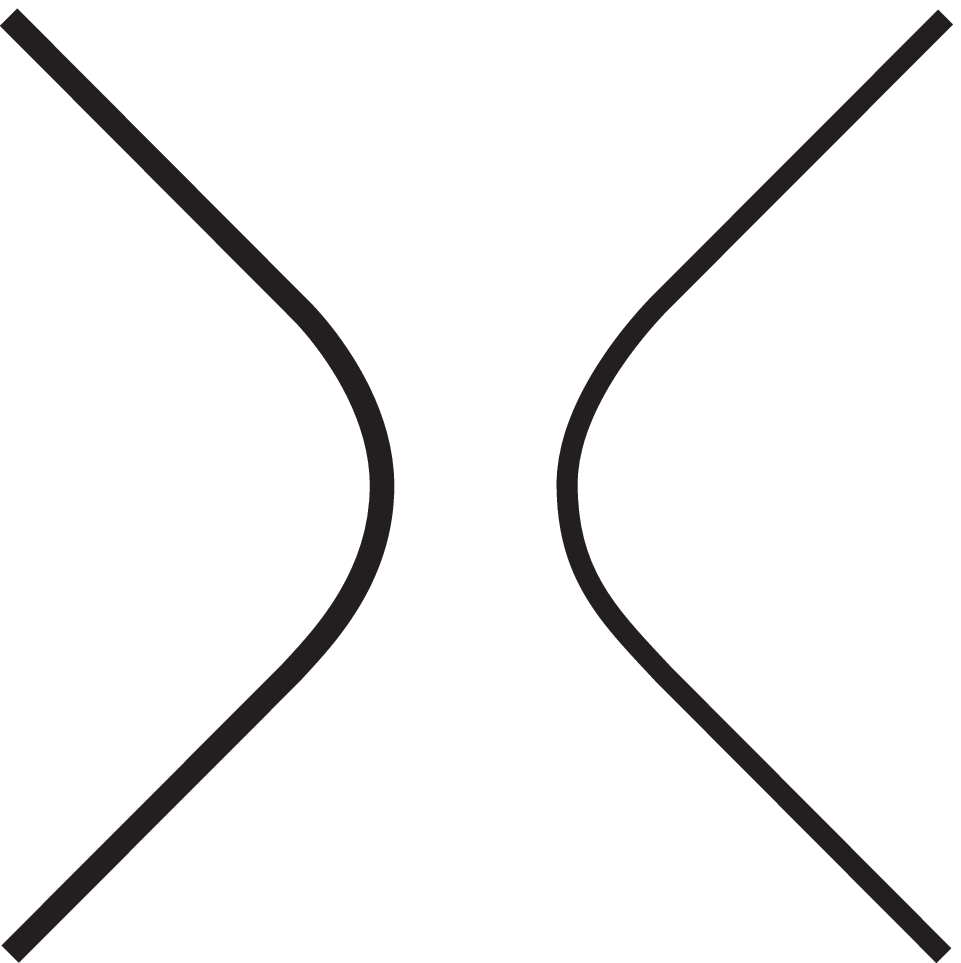}}}
\newcommand{\skcrv}{\raisebox{-0.25\height}{\includegraphics[width=0.5cm]{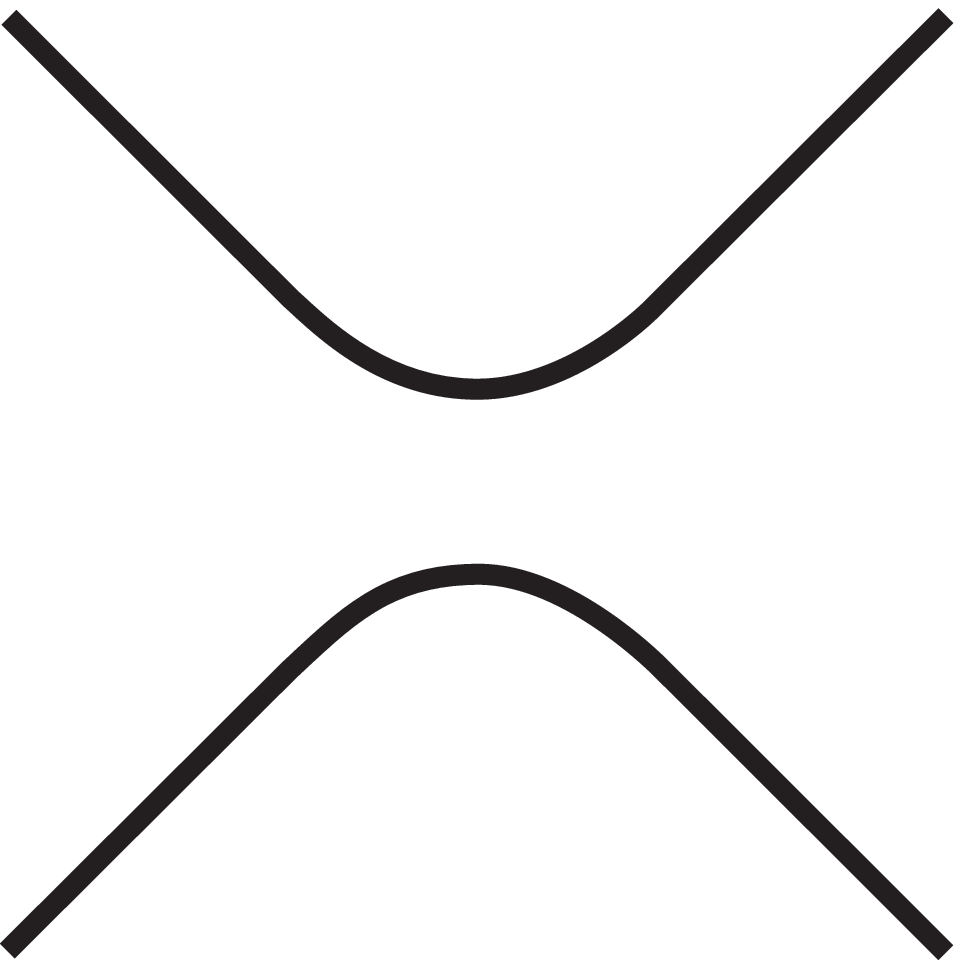}}}
\newcommand{\eps}{\varepsilon}
\title{Framed $4$-valent Graph Minor Theory II: Special Minors and New Examples}
\begin{document}

\maketitle

\begin{abstract}
In the present paper, we proceed the study of framed $4$-graph minor
theory initiated in \cite{ManStoneFlower} and justify the planarity
theorem for arbitrary framed $4$-graphs; besides, we prove analogous
results for embeddability in $\R{}P^{2}$.

{\bf Keywords:} graph, $4$-valent, minor, planarity, projective
plane, embedding, immersion, Wagner conjecture.
\end{abstract}

{\bf AMS MSC} 05C83,57M25, 57M27

\section{Introduction. Basic Notions}

Some years ago, a milestone in graph theory was established: as a
result of series of papers by Robertson, Seymour (and later joined
by Thomas) \cite{RS20} proved the celebrated Wagner conjecture
\cite{Wagner} which stated that if a class of graphs (considered up
to homeomorphism) is minor-closed (i.e., it is closed under edge
deletion and edge contraction), then it can be characterized by a
finite number of excluded minors. For a beautiful review of the
subject we refer the reader to L.Lov\'asz \cite{Lovasz}.

This conjecture was motivated by various evidences for concrete
natural minor-closed properties of graphs, such as knotless or
linkless embeddability in $\R^{3}$, planarity or embeddability in a
standardly embedded $S_{g}\subset \R^{3}$.

Framed $4$-valent graphs (see definition below) are a very important
class of graphs which arise as medial graphs of arbitrary graphs
drawn on $2$-surfaces. In some sense, they approximate arbitrary
graphs; in particular, a new proof of the Pontrjagin-Kuratowski
planarity criterion was recently found by Nikonov \cite{Nikonov}.
The two {\em smoothing} operations for framed $4$-valent medial
graph $M$ of a graph $\Gamma$ in a $2$-surface $\Sigma$ naturally
correspond to edge contraction and edge deletion of the graph
$G\subset \Sigma$ for which $M$ is the medial graph, see Fig.
\ref{smcontrdel}.

In the present paper, we proceed the study of minor closed
properties for framed $4$-valent graphs and go on proving theorems
that for some minor closed properties the number of minimal minor
obstructions is finite. Here a property is called {\em minor closed}
if whenever $P'$ is a minor of $P$ and $P$ possesses the property,
so does $P'$; a graph $Q$ is called a {\em minimal minor
obstruction} for some property if $Q$ does not possess the desired
property and all minors of $Q$ do; in \cite{ManStoneFlower} and in
the present paper we deal with various definitions of minors; they
lead to definitions of minor closed properties.

In \cite{ManStoneFlower}, we introduced the class of framed
$4$-graphs, $4$-valent graph minor theory and proved a planarity
criterion for framed $4$-graphs admitting a source-sink structure
(see below). Whenever drawing a framed four-valent graph on the
plane, we shall indicate its vertices by solid dots,
(self)intersection points of edges will be encircled, and the
framing is assumed to be induced from the plane: those half-edges
which are drawn opposite in $\R^{2}$ are thought to be opposite.
Half-edges  of a framed four-valent graph incident to the same
vertex are which are not {\em opposite}, are called {\em adjacent}.
According to the main theorem of \cite{ManStoneFlower}, the only
graph which was an obstruction to planarity for framed $4$-valent
graphs, is the $\Delta$-graph, see Fig. \ref{delta}.

\begin{figure}
\centering\includegraphics[width=200pt]{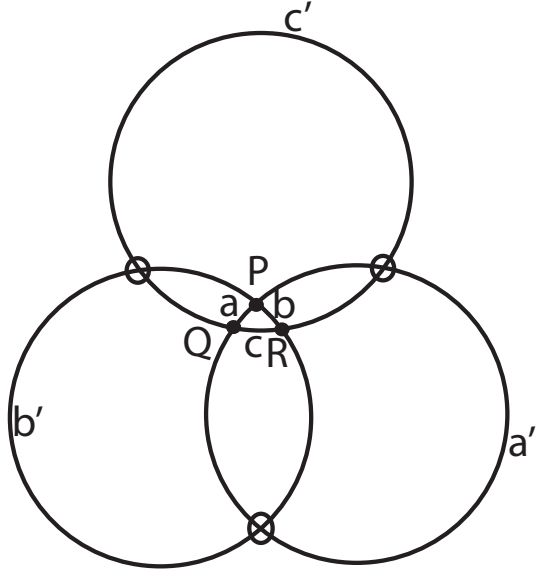} \caption{The
Graph $\Delta$} \label{Delta}
\end{figure}

This graph $\Delta$, in turn, successfully turns out to be the
unique obstruction (unique forbidden minor) for immersibility in
$\R^{2}$ with no more than $2$ crossings, for linkless embeddability
in $\R^{3}$ (in a proper sense). However, all these properties hold
upon an important obstruction imposed on framed $4$-graphs: {\em the
existence of a source-sink structure}.

We recall that a regular $4$-graph is called  {\em framed} if each
vertex of it is endowed with a {\em framing}: for the four emanating
edges, we indicate two pairs of {\em opposite} edges. Non-opposite
edegs are called {\em adjacent}. Besides graphs in the proper sense
we also allow $4$-graphs to have circular components.

At every vertex $V$ of a framed $4$-graph $\Gamma$, there are two
ways of pairing the four edges into two pairs of adjacent ones. For
each of these two pairings, we define the {\em smoothing} of
 $\Gamma$ at  $V$ as the graph obtained
by breaking $\Gamma$ at $X$, and pasting together one pair of the
four edges edges into one edge and the other pair of edges into the
other edge; $\skcr\to \skcrv$ and $\skcr\to \skcrh$.

\begin{figure}
\centering\includegraphics[width=220pt]{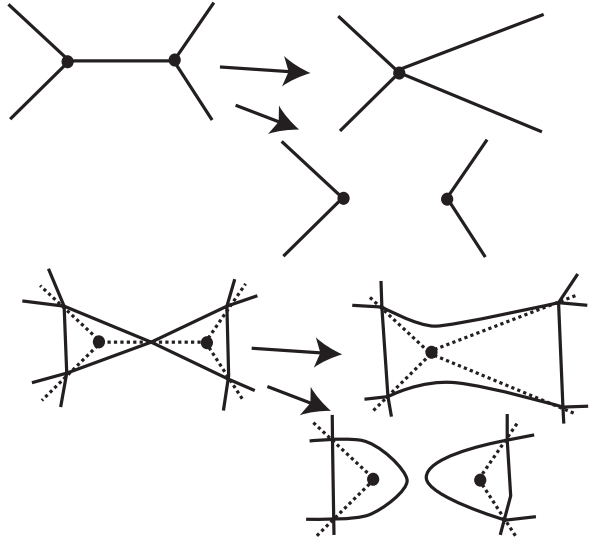} \caption{Edge
deletion and edge contraction yield smoothing} \label{smcontrdel}
\end{figure}

\begin{dfn}
A {\em source-sink structure} of a framed $4$-graph is an
orientation of all its circular edges together with an orientation
of all its non-circular edges such that at every crossing two
opposite half-edges are incoming, and the other two are emanating.
\end{dfn}

\begin{rk}
Whenever talking about an embedding or an immersion of a framed
$4$-graph into any $2$-surface we always assume its framing to be
preserved: opposite edges at every crossing should be locally
opposite on the surface.
\end{rk}

It follows obviously from the definition that for a connected framed
$4$-graph there exist no more than two source-sink structures:
starting with an orientation of an edge, we can orient all edges of
the corresponding  connected component.

Moreover, the {\em smoothing} operation at crossings, agrees with
the source-sink orientation: if the initial graph admits a
source-sink structure, then every smoothing of it inherits this
source-sink structure, see Fig.\ref{ssrespectsori}.

\begin{figure}
\centering\includegraphics[width=200pt]{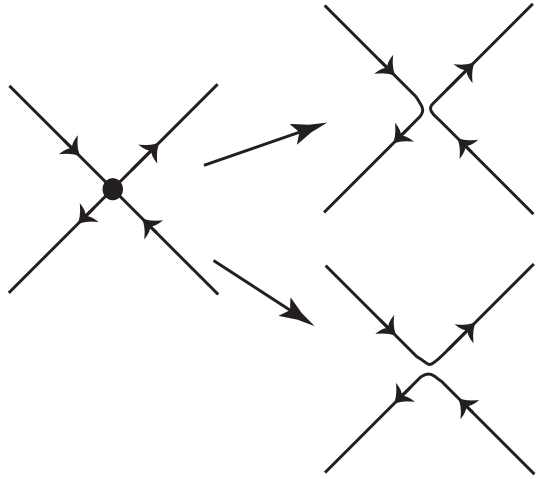} \caption{The
smoothing operation respects source-sink structure}
\label{ssrespectsori}
\end{figure}

\begin{dfn}
A framed $4$-valent graph $G'$ is a {\em minor} of a framed
$4$-valent graph $G$ if $G'$ can be obtained from $G$ by a sequence
of {\em smoothing} operations {\em (}$\skcr\to \skcrv$ and $\skcr\to
\skcrh${\em )} and deletions of connected components.
\end{dfn}

Now, let us look at the planarity problem. There exists a very
simple one-vertex graph $\Gamma$ having one vertex $X$ and two
half-edges $a$ and $b$ both adjacent to $X$ and each being opposite
to itself at $X$, see Fig. \ref{grX}.

\begin{figure}
\centering\includegraphics[width=200pt]{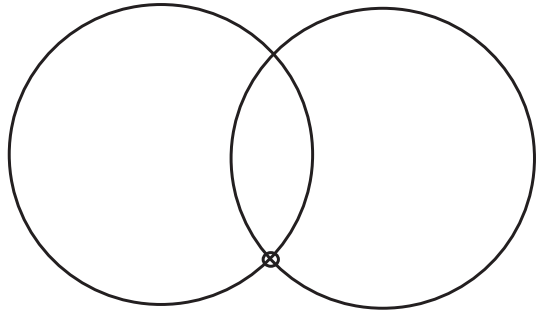} \caption{The graph
$\Gamma$} \label{grX}
\end{figure}

Note that $X$ admits no source-sink structures, thus, $X$ is not a
minor of any framed $4$-graph admitting a source-sink structure.

This framed $4$-graph is obviously non-planar: we have two cycles
with exactly one transverse intersection point \cite{ManVasConj}.

Besides, we can see in Fig. \ref{oneinone} that $\Gamma$ sits inside
$\Delta$: in Fig. \ref{oneinone} $\Gamma$ is drawn as a subgraph of
$\Delta$ in {\em red}.

\begin{figure}
\centering\includegraphics[width=200pt]{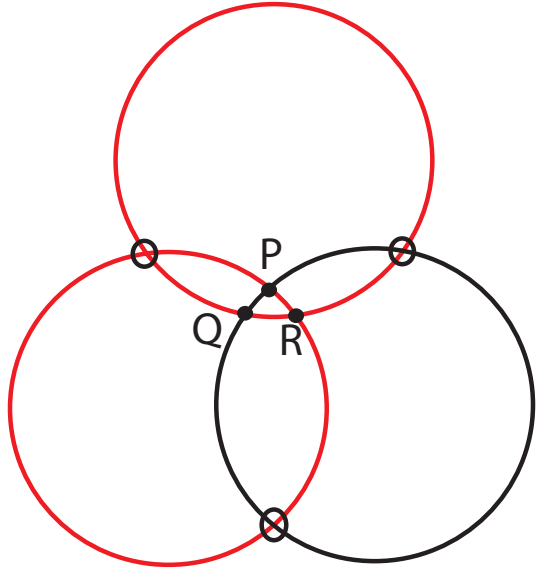} \caption{The
Graph $\Gamma$ inside the graph $\Delta$} \label{oneinone}
\end{figure}

So, $\Delta$ is a non-planar graph because of the graph $\Gamma$
``sitting inside'' it.

Thus, besides minors there should be another (more general) notion
expressing that $\Gamma$ ``sits inside $\Delta$''. The notion
described below will not be {\em local}. Let us now introduce {\em
$s$-minors} of framed $4$-graphs.

\begin{dfn} A framed $4$-graph $\Gamma'$ is an {\em $s$-minor} of
a framed $4$-graph $\Gamma$ if it can be obtained from $\Gamma$ by a
sequence of two operations:

\begin{enumerate}

\item Passing to a subgraph with all vertices of even valency and
deleting all vertices of valency $2$;

\item Removing connected components.

\end{enumerate}
\end{dfn}

\begin{rk}
By definition, if $P$ is a minor of $Q$, then $P$ is an $s$-minor of
$Q$; the inverse statement is wrong: $\Gamma$ is an $s$-minor of
$\Delta$, but is not a minor of $\Delta$.
\end{rk}

As in \cite{ManStoneFlower}, we shall use a way of coding framed
$4$-graphs by chord diagrams. Unlike \cite{ManStoneFlower}, we have
to encode all framed $4$-graphs, which will require a more general
notion of a {\em framed chord diagram}.

\begin{dfn}
By a {\em rotating circuit} of a connected framed $4$-graph not
homeomorphic to a circle we mean a surjective map $f:S^{1}\to
\Gamma$ which is a bijection everywhere except preimages of
crossings of $\Gamma$ such that at every crossing $X$ the
neighbourhoods $V(Y_{1})$
and $V(Y_{2})$ of the two preimages
$Y_{1},Y_{2}$ of $X$ belong to unions of {\em adjacent} half-edges
each. In other words, the circuit ``passes'' from a half-edge to a
non-opposite half-edge.

For a framed $4$-graph homeomorphic to the circle, the {\em circuit}
is a homeomorphism of the circle and the graph.

A circuit $f$ is called {\em good} at a vertex $X$ of $P$ if for the
two inverse images $Y_{1},Y_{2}\in S^{1}$ of $X$, the neighbourhoods
of the small segments $(Y_{1}-\eps,Y_{1}]$ and $(Y_{2}-\eps,Y_{2}]$
of the circle are taken by $f$ to a pair of {\em opposite}
half-edges at $X$.

Otherwise the rotating circuit is called {\em bad} at $X$.

The rotating circuit is {\em good} if it is good at every vertex.
\end{dfn}

Rotating circuits play a crucial role in the study of embeddings of
framed $4$-valent graphs, see
\cite{ManVasConj,ManDkld,ManHdlbg,Ilyu}.

An easy exercise (see, e.g. \cite{ManVasConj}) shows that {\em every
connected framed $4$-graph admits a rotating circuit}.

Usually, we shall denote a circuit by a small letter (say, $f$) when
we want to consider it as a map, and by a capital letter (say, $C$)
when we want to deal with its image as a subgraph.

\begin{rk}
It follows from the definition that if a connected non-circular
framed $4$-graph $P$ admits a source-sink structure then every
rotating circuit of $P$ is good at every vertex.

The opposite statement is true as well; moreover, if there exists
one rotating circuit which is good at every vertex, then the framed
$4$-graph $P$ admits a source-sink structure.
\end{rk}

\begin{dfn}
By a {\em chord diagram} we mean either an oriented circle {\em
(}{\em empty chord diagram}{\em )} or a cubic graph $D$ consisting
of an oriented cycle {\em (}the {\em core}{\em )} passing through
all vertices of $D$ such that the complement to it is a disjoint
union of edges {\em (}{\em chords}{\em )} of the diagram.

A chord diagram is {\em framed} if every chord of it is endowed with
a {\em framing} $0$ or $1$.
\end{dfn}

\begin{dfn}
We say that two chords $a,b$ of a chord diagram $D$ are {\em linked}
if the ends of the chord $b$ belong to two different components of
the complement $Co\backslash \{ a_{1},a_{2}\}$ to the endpoints of
$a$ in the core circle $Co$ of $D$.
\end{dfn}

Having a circuit $C$ of a framed connected $4$-graph $G$, we define
the framed chord diagram $D_{C}(G)$, as follows. If $G$ is a circle,
then $D_{C}(G)$ is empty. Otherwise, think of $C$ as a map
$f:S^{1}\to D$; then we mark by points on $S^{1}$ preimages of
vertices of $G$. Thinking of $S^{1}$ as a core circle and connecting
the preimages by chords, we get the desired cubic graph.

The framing of good vertices is set to be equal to $0$, the framing
of bad vertices is set to be equal to $1$.

\begin{rk}
{\em (}Framed{\em )} chord diagrams are considered up to
combinatorial equivalence.
\end{rk}

The opposite operation (of restoring a framed $4$-graph  from a
chord diagram) is obtained by removing chords from the chord diagram
and approaching two endpoints of each chord towards each other. For
every chord, we create a crossing, and for every chord with framing
zero, we create a small twist,
 as shown in Fig. \ref{CDgrF}.

\begin{figure}
\centering\includegraphics[width=200pt]{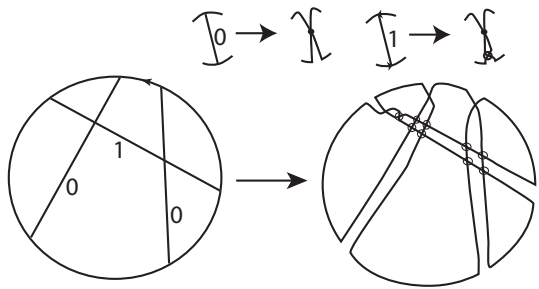}
\caption{Restoring a framed $4$-graph from a chord diagram}
\label{CDgrF}
\end{figure}

\begin{dfn}
A (framed) chord diagram $D'$ is called a {\em subdiagram} of a
chord diagram $D$ if $D$ can be obtained from $D$ by deleting some
chords and their endpoints (with framing respected).
\end{dfn}

It follows from the definition that the removal of a chord from a
framed chord diagram results in a smoothing of a framed $4$-graph.
Consequently, if $D'$ is a subdiagram of $D$, then the resulting
framed $4$-graph $G(D')$ is a {\em minor} of $G(D)$.

\section{The Planarity Criterion}

Note that planarity is a minor closed and $s$-minor closed property;
thus, it makes sense to look for minimal planarity obstructions.

When dealing with all framed $4$-graphs (not necessarily admitting a
source-sink structure), we obtain the following
\begin{thm}
A graph $P$ is non-planar if and only if it admits either $\Gamma$
or $\Delta$ as a minor.

Alternatively, $P$ is non-planar if and only if it admits $\Gamma$
as an $s$-minor.\label{theo}
\end{thm}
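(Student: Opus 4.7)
The plan is to split the argument according to whether $P$ admits a source-sink structure. The easy direction is the standard minor-closed observation: both $\Gamma$ and $\Delta$ are non-planar, and neither a smoothing nor a component deletion can turn a non-planar framed $4$-graph into a planar one; hence containing $\Gamma$ or $\Delta$ as a minor forces non-planarity. The same reasoning shows planarity is $s$-minor-closed (passing to an even-valent subgraph and suppressing $2$-valent vertices preserves planarity), so $\Gamma$ as an $s$-minor also suffices for non-planarity.

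For the hard direction, assume $P$ is non-planar; passing to a non-planar connected component lets me assume $P$ is connected. If $P$ admits a source-sink structure, the main theorem of \cite{ManStoneFlower} immediately produces $\Delta$ as a minor. If not, then by the remark recalled in the excerpt no rotating circuit of $P$ is good at every vertex. Pick any rotating circuit $C$: the associated framed chord diagram $D_{C}(P)$ has at least one chord $c$ of framing $1$. Remove all chords other than $c$ from $D_{C}(P)$. Since the removal of a chord in a framed chord diagram realises a smoothing of the underlying framed $4$-graph, this yields a minor of $P$ whose chord diagram consists of the single framing-$1$ chord $c$; unpacking the reconstruction of Fig.~\ref{CDgrF} for this diagram gives exactly $\Gamma$. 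So $P$ has $\Gamma$ as a minor.

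The equivalent $s$-minor formulation then falls out: a minor is always an $s$-minor, so if $P$ has $\Gamma$ as a minor we are done; and if $P$ has $\Delta$ as a minor, the inclusion of Fig.~\ref{oneinone} exhibits $\Gamma$ as an $s$-minor of $\Delta$, which combined with transitivity of the $s$-minor relation yields $\Gamma$ as an $s$-minor of $P$. Together with the easy direction, this establishes the second half of the theorem.

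The principal technical obstacle is the chord-diagram step in the non-source-sink case: one must verify that the one-chord diagram with framing~$1$ reconstructs precisely to $\Gamma$, with the two emerging loops being self-opposite at the surviving vertex, rather than to the planar transverse figure-eight obtained from the framing-$0$ analogue. This is a matter of carefully tracking how framing governs the pairing of half-edges at the collapsed vertex, but requires no substantial new idea beyond the definitions already in place.
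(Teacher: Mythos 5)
Your treatment of the first (minor) statement is essentially the paper's own proof: the same case split on the existence of a source-sink structure, the same appeal to \cite{ManStoneFlower} in the source-sink case, and the same chord-diagram argument in the other case (a rotating circuit must have a framing-$1$ chord, deleting all other chords realizes a sequence of smoothings, and the one-chord framing-$1$ diagram reconstructs to $\Gamma$). Your explicit reduction to a connected non-planar component and your remark that the framing-$1$ reconstruction must be checked carefully are fine and only make explicit what the paper leaves implicit.

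Where you genuinely diverge is the second ($s$-minor) statement. The paper does not derive it from the first statement at all: it quotes the planarity criterion of \cite{ManVasConj} (a framed $4$-graph is non-planar iff it contains two edge-disjoint cycles with exactly one transverse intersection point) and observes that such a pair of cycles is precisely an even subgraph which, after suppressing the $2$-valent vertices, is $\Gamma$ --- i.e.\ $\Gamma$ appears directly as an $s$-minor. You instead deduce the $s$-minor statement from the minor statement using transitivity, the inclusion of Fig.~\ref{oneinone}, and the paper's remark that every minor is an $s$-minor. Be aware that this remark is doing real work and is not as automatic as ``by definition'' suggests: a smoothing keeps all four half-edges at a vertex and resolves the vertex into two strands, which is not a subgraph operation --- for instance, smoothing the planar figure-eight (one vertex, two adjacent loops) can produce two disjoint circles, which is a minor but cannot be obtained by passing to an even subgraph and suppressing $2$-valent vertices. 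The specific instance you need, namely that $\Gamma$ as a minor of $P$ forces $\Gamma$ as an $s$-minor of $P$, amounts to exhibiting inside $P$ two edge-disjoint cycles crossing transversally at a single vertex, which is exactly the content of the \cite{ManVasConj} criterion the paper invokes. So your route is shorter on paper but rests on that remark; if you do not want to take it on faith, you should either prove the remark (at least in the cases used) or follow the paper and appeal to \cite{ManVasConj} directly for the $s$-minor half.
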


\begin{proof}
This proof goes along the lines of \cite{ManVasConj}: the second
statement of the theorem actually repeats the main statement of
\cite{ManVasConj}: a framed $4$-graph is non-planar iff it contains
two cycles sharing no edges and having exactly one transverse
intersection point. These two cycles form exactly an $s$-minor
$\Gamma$ inside $P$.

Let us now prove the first statement of the theorem. If a graph $P$
admits a source-sink structure then it admits $\Delta$ as a minor,
as proved in \cite{ManStoneFlower}. Otherwise, let us construct a
rotating circuit and a chord diagram of $P$. by definition, this
chord diagram will have at least one chord of framing one. This
means that $G(D_{1})$ is a minor of $P$, where $D_{1}$ is the chord
diagram with the unique chord of framing $1$. But one can easily see
that $G(D_{1})$ is isomorphic to $\Gamma$.

 This exactly means that $P$ contains $\Gamma$ as a
minor.
\end{proof}

\section{Checkerboard Embeddings and $\R{}P^{2}$}

\begin{dfn}
An embedding of a graph $P$ in a $2$-surface $\Sigma$ is {\em
cellular} if the complement $\Sigma\backslash P$ is a union of
$2$-cells.
\end{dfn}

When talking of embeddings, one usually deals with cellular ones.
For example, when defining the minimal embedding genus for a given
graph $P$, one certainly means the genus of a cellular embedding.
Nevertheless, the {\em cellular embeddability} into a surface of a
given genus $g$ is not a minor closed property (it is not so for
arbitrary graphs and minors in the usual sense; neither it is so for
framed $4$-graphs); the reason is that if $P$ is embeddable into
$\Sigma$, then it yields an embedding of any minor $P'$ of $P$ into
$\Sigma$; however, it may well happen that the complement to the
image $P$ is a union of $2$-cells, whence the complement to the
image of $P'$ is not.

Thus, we shall not restrict ourselves to just cellular embdeddings;
thus, for instance, every {\em planar} framed $4$-graph is
embeddable into any $2$-surface.

Having a framed $4$-graph $P$ and a $2$-surface $\Sigma$, we may
consider embeddings of $P$ into $\Sigma$. Among all embeddings, we
draw special attention to {\em checkerboard embedding}.

\begin{dfn}
A {\em checkerboard embedding} $f:P\to \Sigma$ is an embedding such
that the connected components of the complement $\Sigma\backslash P$
can be colored in black and white in a way such that every two
components sharing an edge have different colours.
\end{dfn}

One can easily see that checkerboard embeddability into any fixed
$2$-surface $\Sigma$ is a minor closed property: if the complement
to the image of a framed $4$-graph $P$ is checkerboard colourable,
then so is the complement to the image of $P'$, where $P'$ is
obtained from $P'$ by a smoothing at a vertex. Certainly, the
connected components to the image of $P'$ might not be homeomorphic
to $2$-cells.

For more about checkerboard embeddings of graphs, see \cite{FM1}.

Note that in the case when $\Sigma$ is $\R^{2}$ (or $S^{2})$), all
embeddings are checkerboard. It can be shown (\cite{ManDkld}) that
an embedding is checkerboard if and only if the image of the graph
viewed as an element of $H_{1}(\Sigma, \Z_{2})$ is zero.

\begin{thm}[\cite{ManDkld}]
If a framed $4$-graph $P$ admits a source-sink structure and a
 cellular checkerboard embedding
into a closed $2$-surface $\Sigma$, then $\Sigma$ is orientable. If
$P$ admits no source-sink structure but admits a cellular
checkerboard embedding into $\Sigma$ then $\Sigma$ is not
orientable.\label{ori}
\end{thm}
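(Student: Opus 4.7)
My plan is to reduce both directions of the theorem to a single local observation at each vertex and then globalise. Fix a cellular checkerboard embedding of $P$ in $\Sigma$ and colour the $2$-cells black and white. At a vertex $v$, label the four half-edges $e_1,e_2,e_3,e_4$ in their cyclic order around $v$; the framing-preserving assumption makes $\{e_1,e_3\}$ and $\{e_2,e_4\}$ the two opposite pairs, and the four surrounding face corners alternate in colour, say $f_{12},f_{34}$ black and $f_{23},f_{41}$ white. The local observation I would extract first is that any orientation of a face corner (i.e.\ a local orientation of $\Sigma$ there) declares one of the two bounding half-edges ``arriving'' and the other ``leaving'' along the counterclockwise boundary walk, thereby assigning an in/out status to each at $v$. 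A direct $4$-case computation then shows that precisely when the two black corners $f_{12},f_{34}$ are oriented by the same local orientation of $\Sigma$ at $v$, the two opposite pairs $\{e_1,e_3\}$ and $\{e_2,e_4\}$ each consist of two incoming or two outgoing half-edges, so the assignment is a source-sink structure at $v$; with mismatched local orientations at $v$ the opposite pairs end up containing one of each, violating the source-sink condition.

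For the contrapositive of the second assertion---if $\Sigma$ is orientable and $P$ embeds cellularly with checkerboard colouring, then $P$ admits a source-sink structure---I would fix a global orientation of $\Sigma$, orient every $2$-cell counterclockwise with respect to it, and then orient each edge so that its adjacent black face traverses it positively on its boundary. At every vertex both black corners automatically inherit matching local orientations from the global one, so by the local observation the edge orientations form a source-sink structure at each vertex. Thus if $P$ has no source-sink structure and is cellularly checkerboard embedded in $\Sigma$, then $\Sigma$ must be non-orientable.

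For the direct direction---source-sink structure plus cellular checkerboard embedding implies $\Sigma$ orientable---I would reverse the recipe. Given edge orientations from the source-sink structure, orient each $2$-cell $F$ by a walking direction along its boundary cycle: if $F$ is black, walk so that every edge is traversed in the direction of its orientation; if $F$ is white, walk against each orientation. The source-sink condition at consecutive corners of $\partial F$ is exactly what makes this rule consistent as we pass from one boundary edge to the next: if I arrive at a vertex with the edge's orientation, the next boundary edge, being adjacent and therefore of opposite in/out status at that vertex, must leave with its orientation too. Each face thus acquires a well-defined orientation, and because each edge lies between one black and one white face, the two boundary walks traverse that edge in opposite directions, which is exactly the orientability condition for $\Sigma$.

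The main obstacle I expect is the global consistency of the face-orientation construction in the second direction: making sure the local rule about walking with or against edge orientations integrates around the whole boundary of each face rather than just at a single corner. Cellularity is essential here, since only then is each face a disk with a single boundary cycle along which a walking direction can be propagated corner by corner; without cellularity a face could carry handles or extra boundary components and the local rule might fail to globalise. Once the face orientations are produced, comparing them across each shared edge is immediate from the colour-dependent sign in the construction.
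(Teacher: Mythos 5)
Your argument is correct. Note that the paper itself offers no proof of Theorem \ref{ori} --- it is quoted from \cite{ManDkld} --- so there is nothing internal to compare against; but your proof is a sound, self-contained version of the standard argument relating source-sink structures to coherent orientations of the black and white cells. Both directions check out: the checkerboard condition guarantees that each edge is incident to exactly one black and one white face (a face cannot lie on both sides of an edge), which is what makes your edge-orientation rule in the orientable case well defined, and your local computation at a vertex correctly uses the framing-preservation to identify corners with pairs of adjacent (non-opposite) half-edges, so that the source-sink condition is exactly ``one in, one out'' at every corner. In the converse direction the corner-by-corner propagation of the walking direction around each boundary cycle is valid precisely because each face is an open disk traversing each of its boundary edges once, and coherence of the resulting cell orientations across edges (black traverses positively, white negatively) does imply orientability of the closed surface. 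One small remark: cellularity is genuinely needed only in the source-sink $\Rightarrow$ orientable direction, as you say; your contrapositive argument for the second assertion (a global orientation induces a source-sink structure via the black faces) works even for non-cellular checkerboard embeddings, which is consistent with the asymmetry in how the paper later uses the theorem.
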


It turns out (see \cite{ManDkld}) that checkerboard embeddings are
very convenient to deal with: they lead to a splitting of the
surface into the {\em black part} and the {\em white part}, as
follows.

Given a framed $4$-graph $P$; let us consider a rotating circuit $C$
of it. A checkerboard embedding $g:P\to \Sigma$ leads to the
composite map $f\circ g:S^{1}\to \Sigma$; this map is bijective
everywhere except those points mapped to images of crossings of $P$.
At every crossing, we can slightly deform the map $f\circ g$ to get
an embedding.

\begin{figure}
\centering\includegraphics[width=200pt]{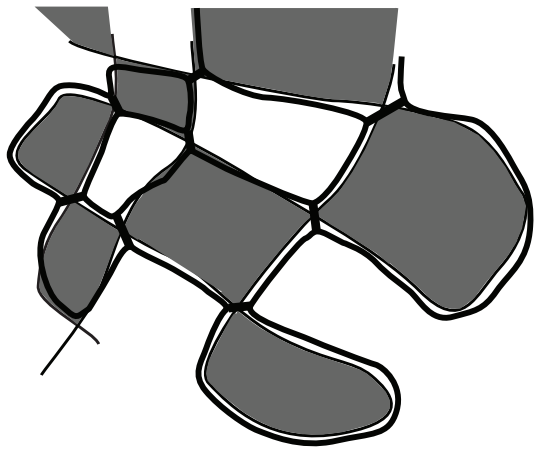} \caption{A
smoothed checkerboard embedding} \label{smoothinga}
\end{figure}

Denote the resulting embedding by $f'$. Note that $f(S^{1})$ splits
the surface $\Sigma$ into the {\em black part} $\Sigma_{B}$ and the
white part $\Sigma_{W}$. Moreover, chords of the chord diagram
$D_{C}(P)$ can be naturally thought of as ``black'' ones and
``white'' ones: small segments in neighbourhoods of vertices shown
in Fig. \ref{smoothinga} can be thought of as images of chords of
the chord diagram.

Now, if the surface $\Sigma$ is homeomorphic to $\R{}P^{2}$ then one
of the two parts $\Sigma_{B},\Sigma_{W}$ is homeomorphic to the
M\"obius band, and the other part is homeomorphic to the disc.

This means that the chord diagram $D_{C}(P)$ has a very specific
form. Namely, in \cite{ManDkld}, the following theorem is proved.

\begin{thm}
A framed $4$-graph $P$ is embeddable in $\R{}P^{2}$ if and only if
the for some  rotating circuit $C$ there is a way to split all
chords of $D_{C}(P)$ into two families in such a way that the
resulting subdiagrams $D_{1}$ and $D_{2}$ are as follows:
\begin{enumerate}

\item All chords of $D_{1}$ having framing $0$ are pairwise unlinked;

\item All chords of $D_{2}$ of framing $1$ are pairwise linked; all
chords of framing $0$ of $D_{2}$ are pairwise unliked with all other
chords of $D_{2}$.

\end{enumerate}
\label{thy}
\end{thm}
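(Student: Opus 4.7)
My approach is to treat the two directions separately, in each case translating between the combinatorial data of $D_C(P)$ and the geometric data of an embedding of $P$ in $\R P^{2}$ through the smoothing-of-circuit picture described in the preceding paragraphs. The geometric input will be two standard classification facts about proper arcs in surfaces with boundary: (a) in a disk, any disjoint system of proper arcs with endpoints on the boundary corresponds exactly to a pairwise unlinked chord system on the boundary circle; (b) in a M\"obius band, every proper arc is isotopic either to a boundary-parallel (``trivial'') arc sitting in a collar of the boundary, or to a ``core-crossing'' arc meeting the core circle once transversally, and within any disjoint family, trivial arcs are unlinked with every other arc on the boundary, while any two disjoint core-crossing arcs are pairwise linked on the boundary.

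\textbf{Sufficiency.} Assuming $D_C(P)=D_1\sqcup D_2$ with the prescribed properties, I would fix a null-homotopic simple closed curve in $\R P^{2}$ bounding a disk $\Sigma_B$ on one side and a M\"obius band $\Sigma_W$ on the other. Using (a), the pairwise unlinked framing-$0$ chords of $D_1$ can be realized as disjoint proper arcs in $\Sigma_B$. Using (b), the pairwise linked framing-$1$ chords of $D_2$ can be realized as disjoint core-crossing arcs in $\Sigma_W$, and the framing-$0$ chords of $D_2$, being unlinked with every other chord of $D_2$, can be realized as trivial arcs in a collar of $\partial \Sigma_W$. Re-inserting at each chord the local picture of Fig.~\ref{CDgrF} (chosen according to the framing) then yields an embedding of the original $P$ in $\R P^{2}$; a direct local check confirms that ``trivial arc / core-crossing arc'' matches ``good vertex / bad vertex'' so that the framings of $P$ are correctly reproduced.

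\textbf{Necessity.} For the converse, given an embedding $g\colon P\to \R P^{2}$ and a rotating circuit $C$ of $P$, I would form $g\circ f\colon S^1\to\R P^{2}$ and perturb it locally at each transverse crossing as in Fig.~\ref{smoothinga} to obtain an embedded curve $f'\colon S^1\hookrightarrow\R P^{2}$. The critical preliminary step is to show that $C$ can be chosen so that $f'(S^1)$ is null-homologous in $\R P^{2}$, hence null-homotopic and hence bounding a disk on one side and a M\"obius band on the other. Once this is done, every chord of $D_C(P)$ is naturally identified with a short arc lying in exactly one of $\Sigma_B$, $\Sigma_W$, and assigning these two types of chords to $D_1$ and $D_2$ respectively makes conditions (1) and (2) immediate consequences of (a) and (b).

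\textbf{Main obstacle.} The hard part, as I see it, is the preliminary null-homology claim in the necessity direction: a priori the smoothed circuit could be one-sided in $\R P^{2}$, in which case no black/white decomposition exists. I would try to handle this by appealing to Theorem~\ref{ori} to reduce to the checkerboard case, combined with a parity argument on framings of chords along the circuit, exploiting the existence of rotating circuits noted earlier in the text. A secondary, more bookkeeping-type issue is to justify the apparent asymmetry between conditions (1) and (2) in the statement by verifying that framing-$1$ chords essentially cannot occur among those arcs landing in the orientable piece $\Sigma_B$, whereas both framings can occur in $\Sigma_W$ subject only to the stated linking constraints.
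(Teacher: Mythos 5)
Your plan follows what is clearly the intended route (and the only route suggested by the surrounding text): smooth the circuit, split $\R P^{2}$ into a disk and a M\"obius band, and translate chords into proper arcs. Note, however, that the paper itself contains no proof of Theorem \ref{thy} --- it is quoted from \cite{ManDkld} --- so what matters is whether your sketch actually closes the argument. It does not, in two places.

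For necessity, your key preliminary step --- choosing the rotating circuit $C$ so that the embedded curve $f'(S^{1})$ is null-homologous --- is not available. The smoothed curve traverses every edge of $P$ exactly once and the smoothings are local, so $[f'(S^{1})]=[P]\in H_{1}(\R P^{2};\Z_{2})$ for \emph{every} rotating circuit; the class is independent of $C$ and vanishes precisely when the embedding is checkerboard. Hence for a non-checkerboard embedding the smoothed curve is one-sided no matter which circuit you take, its complement is a single disk whose boundary double covers the curve, and the black/white decomposition you rely on simply does not exist. Theorem \ref{ori} is about cellular checkerboard embeddings and the orientability of the target; it does not allow you to replace a non-checkerboard embedding by a checkerboard one. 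So you must either restrict the statement to checkerboard embeddings (as the paper's subsequent main theorem in effect does) or give a genuinely different argument for the one-sided case; your proposal contains neither.

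For sufficiency, the issue you set aside as ``secondary bookkeeping'' is essential. As printed, condition (1) imposes nothing on framing-$1$ chords of $D_{1}$, and your construction only places the framing-$0$ chords of $D_{1}$ in the disk; a framing-$1$ chord cannot be realized by an arc in the disk at all, since at a bad vertex the two strands of the circuit run parallel across the chord, which contradicts the disk lying on one consistent side of its oriented boundary --- this is exactly the local framing/arc-type check (good vertex $\leftrightarrow$ boundary-parallel arc, bad vertex $\leftrightarrow$ core-crossing arc) that you assert but must actually prove, and it shows the hypothesis has to read ``all chords of $D_{1}$ have framing $0$ and are pairwise unlinked.'' Taken literally, the stated criterion is even false: the diagram $\hat D$ with two unlinked framing-$1$ chords satisfies (1)--(2) with $D_{1}=\hat D$ and $D_{2}=\emptyset$, yet $\Gamma_{1}$ does not embed in $\R P^{2}$, because it contains two disjoint cycles each crossing a third cycle exactly once transversally, which is impossible for the $\Z_{2}$ intersection form of $\R P^{2}$. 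Your classification of arcs in the disk and the M\"obius band (facts (a) and (b)) is correct and is indeed the right key lemma, but without proving the framing/arc-type correspondence and without correcting (1), the sufficiency direction cannot be completed as written.
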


In \cite{ManDkld}, it is shown that if the condition of Theorem
\ref{thy} holds for {\em some} rotating circuit of $P$ then it holds
for {\em any} rotating circuit of $P$. The point is that if the
condition of Theorem \ref{thy} holds for {\em some} rotating
circuit, then this means that one family of chords leads to the
subdiagram which corresponds to the disc, and the other family leads
to the subdiagram which corresponds to the M\"o{}bius band. For more
details, see \cite{ManDkld}.

Now, let us denote by ${\hat D}$ the framed chord diagram with two
unlinked chords of framing $1$; let $\Gamma_{1}$ be the
corresponding framed $4$-graph.

Now, we are ready to formulate the main theorem of the present
section.
\begin{thm}
A framed $4$-graph $P$ is not checkerboard-embeddable in $\R{}P^{2}$
if and only if it contains one of the subgraphs $\Delta,\Gamma_{1}$
as a minor.

More precisely, if $P$ admits a source-sink structure then
checkerboard-embeddability in $\R{}P^{2}$ is equivalent to
checkerboard embeddability into $\R{}^{2}$. If $P$ does not admit a
source-sink structure then the only obstruction to checkerboard
embeddability into $\R{}P^{2}$ is the existence of $\Gamma_{1}$ as a
minor.
\end{thm}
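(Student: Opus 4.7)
The plan is to treat the two cases of the refinement separately and in each reduce to the chord-diagram criterion of Theorem~\ref{thy} together with the planarity criterion of Theorem~\ref{theo}. For the ``only if'' direction I first note that checkerboard-embeddability into a fixed surface is minor-closed, so it suffices to check that $\Delta$ and $\Gamma_{1}$ themselves do not checkerboard-embed in $\R{}P^{2}$. A hypothetical checkerboard embedding of $\Delta$ in $\R{}P^{2}$ is either cellular, which contradicts Theorem~\ref{ori} because $\Delta$ admits a source-sink structure while $\R{}P^{2}$ is non-orientable, or non-cellular, in which case some face contains a non-contractible simple closed curve $\gamma$; since $\R{}P^{2}\setminus\gamma$ is an open disc, $\Delta$ would then be planar, contradicting Theorem~\ref{theo}. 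For $\Gamma_{1}$: its framed chord diagram $\hat D$ consists of two unlinked framing-$1$ chords, which cannot be partitioned as in Theorem~\ref{thy}, since both would have to land in $D_{2}$, where framing-$1$ chords must be pairwise linked.

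In the source-sink case of the ``if'' direction the argument is short. A source-sink structure on $P$ is inherited by every smoothing, so no framed $4$-graph without source-sink (in particular neither $\Gamma$ nor $\Gamma_{1}$) can be a minor of $P$. The hypothesis therefore reduces to ``$\Delta$ is not a minor of $P$'', and Theorem~\ref{theo} yields planarity of $P$; since $\R^{2}\subset\R{}P^{2}$, this gives a checkerboard embedding into $\R{}P^{2}$ and also establishes the first clause of the refined statement.

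The non-source-sink case is the technical core. Fix any rotating circuit $C$ of $P$ and let $D=D_{C}(P)$. Because $P$ has no source-sink, $D$ contains at least one framing-$1$ chord, and any two framing-$1$ chords of $D$ must be linked: otherwise the two-chord subdiagram equals $\hat D$, and (since removing chords from a framed chord diagram realizes smoothings of the associated framed $4$-graph) $\Gamma_{1}$ would be a minor of $P$, contrary to hypothesis. Placing all framing-$1$ chords into $D_{2}$, it then remains to distribute the framing-$0$ chords so that conditions (1) and (2) of Theorem~\ref{thy} hold: framing-$0$ chords of $D_{1}$ pairwise unlinked, and each framing-$0$ chord of $D_{2}$ unlinked with every other chord of $D_{2}$.

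The main obstacle is completing this distribution. My plan is to exploit the invariance proved in \cite{ManDkld}: the existence of a valid Theorem~\ref{thy}-partition does not depend on the rotating circuit chosen, so if for the present $C$ no valid distribution of framing-$0$ chords exists, a local circuit change at a vertex of the offending configuration (which toggles the framing of the chord at that vertex and alters the linking relations in a controlled way) produces a new diagram $D_{C'}$ in which some two framing-$1$ chords become unlinked; this again gives $\Gamma_{1}$ as a minor of $P$ and contradicts the hypothesis. Hence the distribution can always be completed, Theorem~\ref{thy} supplies the desired checkerboard embedding, and as a by-product the configuration ``$P$ non-source-sink, $\Delta$ a minor, $\Gamma_{1}$ not a minor'' never arises, so the refined statement is consistent with the main one.
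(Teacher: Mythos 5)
The easy direction and the source--sink case of your proposal are fine and essentially coincide with the paper's reductions (Theorem~\ref{ori} plus the ``non-cellular implies planar'' cut-and-paste, and Theorem~\ref{theo}). The genuine gap is in what you yourself call the technical core, the non-source-sink case: after placing all framing-$1$ chords in $D_{2}$, the whole difficulty is to prove that if the framing-$0$ chords cannot be distributed as in Theorem~\ref{thy}, then $P$ contains $\Gamma_{1}$ (or $\Delta$) as a minor. Your plan defers exactly this step to an unspecified ``local circuit change at a vertex of the offending configuration'' which ``alters the linking relations in a controlled way'' --- without identifying the offending configuration, saying at which vertex the circuit is changed, what happens to the linkings, or why two framing-$1$ chords must become unlinked. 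This is precisely the content the paper supplies: it encodes the partition of Theorem~\ref{thy} as $2$-colourability of an auxiliary graph $H$ (vertices are chords; edges join linked pairs containing a framing-$0$ chord and unlinked pairs of framing-$1$ chords), takes a \emph{shortest} odd cycle in $H$, and treats the cases separately: an all-framing-$0$ cycle yields a $(2k+1)$-gon and hence $\Delta$ as a minor; two unlinked framing-$1$ chords yield $\Gamma_{1}$ directly; and the delicate case of exactly one framing-$1$ chord is settled by the explicit induction $P_{2k+1}\supseteq P_{2k-1}\supseteq\dots\supseteq P_{3}$, realized by changing the rotating circuit at the vertices corresponding to $v_{2},v_{3}$, followed by a final circuit change at $v_{1}$ exhibiting $\Gamma_{1}$ inside $P_{3}$. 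Nothing in your sketch replaces this analysis.

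Moreover, the claim your plan hinges on is too strong to be true as you state it. Working only under the hypothesis ``$\Gamma_{1}$ is not a minor'', you assert that the distribution can always be completed and, as a by-product, that the configuration ``$P$ non-source-sink, $\Delta$ a minor, $\Gamma_{1}$ not a minor'' never arises. But the obstruction may consist entirely of framing-$0$ chords (an odd polygon of pairwise linked framing-$0$ chords), and then no change of circuit manufactures unlinked framing-$1$ chords; the correct conclusion there is that $\Delta$ is a minor, not $\Gamma_{1}$. Concretely, the disjoint union of $\Delta$ and $\Gamma$ admits no source-sink structure, has $\Delta$ but not $\Gamma_{1}$ as a minor (every connected minor coming from the $\Delta$-component inherits a source-sink structure, while the $\Gamma$-component has too few vertices), and is not checkerboard-embeddable in $\R{}P^{2}$ since $\Delta$ is not even embeddable there; so your by-product statement fails, and the step of your argument that would establish it cannot be correct --- the $\Delta$-type obstruction must be kept in the case analysis, as the paper does when proving the main equivalence. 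A smaller point: your argument that $\Gamma_{1}$ itself is not embeddable tacitly assumes that framing-$1$ chords must be placed in $D_{2}$, which requires the intended (disc versus M\"obius band) reading of Theorem~\ref{thy}, since its literal statement imposes no condition on framing-$1$ chords of $D_{1}$.
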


\begin{proof}
Note that if a checkerboard embedding of $\iota:P\to \R{}P^{2}$ is
not cellular, then one component of the complement
$\R{}P^{2}\backslash \iota(P)$ is homeomorphic to the M\"o{}bius
band without boundary; removing this M\"{o}bius band and pasting its
boundary component by a disc, we se that  $P$ is actually {\em
planar}.

Thus, according to Theorem \ref{ori}, if $P$ admits a checkerboard
embedding to $\R{}P^{2}$ then $P$ is either planar or it does not
admit a source-sink structure.

We know that if $P$ admits a source-sink structure then $\Delta$ is
the only planarity obstruction.

Now, assume $P$ does not admit a source-sink structure. Take a
rotating circuit $C$ and consider a framed chord diagram $D_{C}(P)$.

According to our assumption, this chord diagram has at least one
chord of framing $1$.

Let us look at the obstruction from Theorem \ref{thy}. We shall try
to split all chords into two families $D_{1}$ and $D_{2}$ and see
when it is impossible.

Let $H$ be the following chord diagram graph: vertices of $H$ are in
one-to-one correspondence with chords of $D_{C}(P)$, and two
vertices are connected by an edge if either one of the corresponding
chords has framing zero and the chords are linked or both chords
have framing $1$ and they are unlinked.

It is easy to see that the existence of two families $D_{1}$ and
$D_{2}$ as in Theorem \ref{thy} means exactly that $H$ is bipartite.
The obstruction for $H$ to be bipartite is an existence of an odd
cycle. Consider such a cycle with the smallest possible number of
vertices; denote them subsequently by $v_{1},\dots, v_{2k+1}$ and
the corresponding chords $c_{1},\dots, c_{2k+1}$ of $D_{C}(P)$. If
all these chords have framing $0$, then the corresponding diagram
has a $(2k+1)$-gon as a subdiagram; hence, $P$ has $\Delta$ as a
minor.

Now, assume there is at least one chord of framing $1$ in the cycle
$v_{1}, \dots, v_{2k+1}$. If there are $3$ chords of framing one
among $c_{j}$, one can easily find a shorter cycle with the same
property. Thus, there are either exactly two chords of framing $1$
or exactly one chord of framing $1$. If we have two chords of
framing $1$ and they are linked, we may find a shorter cycle in $H$
with the required property.

If we have two unlinked chords of framing $1$, then they form a
subdiagram $D'$ such that the minor corresponding to $D'$ is
isomorphic to $\Gamma_{1}$.

Thus, it remains to consider the case when we have exactly one chord
of framing $1$ in our odd cycle. Without loss of generality, assume
the only chord of framing $1$ is $c_{1}$; consider the chords
$c_{2}$ and $c_{3}$ are linked chords of framing $0$.

Consider the framed $4$-graph $P_{2k+1}$ corresponding to this
cycle; it is a minor of $P$. Let us show now that $\Gamma_{1}$ is a
minor of $P_{2k+1}$. First, we shall show that $P_{2k-1}$ is a minor
of $P_{2k+1}$ for every $k\ge 1$ in a way similar to the proof of
Theorem \ref{theo}. Finally, we shall show that $\Gamma_{1}$ is a
minor of $P_{3}$. Denote the chords of $P_{2k+1}$ by the same
letters as those of $P$.

At each chord of framing $0$, there are two ways of smoothing of the
corresponding vertex: one way gives rise to the graph corresponding
to the subdiagram obtained from the initial diagram by deleting the
chord, and the other one.

 Let us change the rotating circuit of
$P_{2k+1}$ at vertices corresponding to $v_{2},v_{3}$.

By abuse of notation, denote by $c_{j}$ the chord of the new circuit
 corresponding to the vertex which corresponds to $c_{j}$ in the
 initial circuit. Then we see that in the chain
formed by all chords except $c_{2},c_{3}$ the incidences changes
only for the pair $(c_{1},c_{4})$; thus, wet a $(2k-1)$-gon.

 Here we see the $(2k-1)$-gon which shows that $P_{2k-1}$ is a
minor of $P_{2k+1}$.

Finally, if we look at $P_{3}$ and change the circuit at the vertex
$v_{1}$ as shown in Fig. \ref{tongbei2}, we see that $P_{3}$
contains $\Gamma_{1}$ as a minor.

\begin{figure}
\centering\includegraphics[width=200pt]{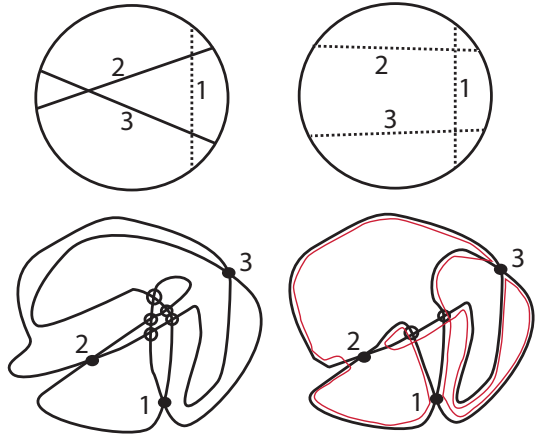}
\caption{$P_{2k-1}$ is a minor of $P_{2k+1}$} \label{tongbei2}
\end{figure}

This completes the proof of the Theorem.

\end{proof}

I am grateful to D.P.Ilyutko for various fruitful consultations and
useful remarks.


\begin{thebibliography}{100}

\bibitem{FM1} Friesen, T., Manturov, V.O., Embeddings of $*$-graphs into $2$-surfaces, {\em Journal of Knot
Theory and Its Ramifications}, Vol. 22, No. 11 (2013), 1341005 (15
pages)

\bibitem{FM2} Friesen, T.,Manturov, V.O., Checkerboard embeddings of *-graphs into nonorientable surfaces, (2013)
{\em arxiv.Math: CO}, 1312.6342

\bibitem{Ilyu} D.P.Ilyutko.(2011), Framed 4-graphs: Euler tours, Gauss circuits and rotating
circuits \emph{Sbornik: Mathematics},202(9):1303-1326.


\bibitem{Lovasz} L.Lov\'asz (2006), Graph Minor Theory, {\em Bull. AMS} (New Series),
Vol. 43, No. 1, pp. 75–86.

\bibitem{ManVasConj} Manturov, V. O. (2005),
A proof of Vassiliev's conjecture on the planarity of singular links
\emph{Izv. Ross. Akad. Nauk Ser. Mat.} \textbf{69} (5) 169-178

\bibitem{ManDkld} Manturov, V.O. (2008), Embeddings of $4$-valent framed
graphs into $2$-surfaces, {\em Doklady Mathematics}, 2009, Vol. 79,
No. 1, pp. 56–58. (Original Russian Text © V.O. Manturov, 2009,
published in Doklady Akademii Nauk, 2009, Vol. 424, No. 3, pp.
308–310).

\bibitem{ManHdlbg} Manturov, V.O., Embeddings of Four-valent Framed
Graphs into 2-surfaces, {\em The Mathematics of Kntos. Theory and
Applications}, Contributions in Mathematical And Computational
Sciences 1., M.Banagl, D.Vogel, Eds., Springer, pp. 169-198.


\bibitem{ManStoneFlower} Manturov, V.O., Framed $4$-valent Graph Minor Theory I: Intoduction. Planarity
Criterion, arxiv:

\bibitem{Nikonov} I. Nikonov, A New Proof of Vassiliev's Conjecture,
(2013) {\em arxiv.Math: CO}, 1306.5521


\bibitem{RS20} N. Robertson, P.D. Seymour: Graph minors XX. Wagner's Conjecture
J. Combin. Theory Ser. B, {\bf 92},  (2), November 2004, pp.
325–357.

\bibitem{Vas} V. A. Vassiliev, (2005),
First-order invariants and cohomology of spaces of embeddings of
self-intersecting curves, {\em Izv. Math.}, 69:5 (2005), pp. 865–912

\bibitem{Wagner} K. Wagner: Graphentheorie, B.J. Hochschultaschenbucher 248/248a,
Mannheim (1970), 61. MR0282850 (44:84)

\end{thebibliography}
\end{document}